\def\Ex{{\mathbb E}}
\def\Pr{{\mathbb P}}
\def\er{{\mathbb R}}
\def\ind{{\mathbf 1}}
\newtheorem{lem}{Lemma}
\newtheorem{thm}[lem]{Theorem}
\newtheorem{prop}[lem]{Proposition}
\newtheorem{cor}[lem]{Corollary}
\title{Modified Paouris inequality}
\author{Rafa{\l} Lata{\l}a\thanks{Research supported by the NCN grant DEC-2012/05/B/ST1/00412.}}
\date{}%%preliminary version 07.12.2013
\begin{document}

\maketitle

\begin{abstract}
The Paouris inequality gives the large deviation estimate for Euclidean norms of log-concave vectors. 
We present a modified version of it and show how the new inequality may be applied to derive tail estimates of 
$l_r$-norms and suprema of norms of coordinate projections of isotropic log-concave vectors.
\end{abstract}

\section{Introduction and Main Results}

A random vector $X$ is called \emph{log-concave} if it has a logarithmically concave distribution, i.e.
$\Pr(X\in \lambda K+(1-\lambda)L)\geq \Pr(X\in K)^{\lambda}\Pr(X\in L)^{1-\lambda}$
for all nonempty compact sets $K,L$ and $\lambda\in [0,1]$. 
The result of Borell \cite{Bo} states that a random vector with the
full dimensional support is log-concave iff it has a logconcave density, i.e. a density of the form $e^{-h(x)}$, where
$h$ is a convex function with values in $(-\infty,\infty]$. A typical example of a log-concave vector is a vector uniformly
distributed over a convex body. In recent years the study of log-concave vectors attracted attention of many researchers,
cf. the forthcoming monograph \cite{BGVV}.

The fundamental result of Paouris \cite{Pa} gives the large deviation estimate for Euclidean norms of log-concave vectors.
It may be stated, c.f.\ \cite{ALLPT}, in the form 
\[
(\Ex|X|^p)^{1/p}\leq C_1(\Ex|X|+\sigma_X(p)) \quad \mbox{ for any }p\geq 1, 
\] 
and any log-concave vector $X$, where here and in the sequel $C_i$ denote universals constant, $|x|$ is the canonical 
Euclidean norm on $\er^n$ and
\[
\sigma_X(p):=\sup_{|t|=1}(\Ex|\langle t,X\rangle|^p)^{1/p},\quad p\geq 1.
\]
In particular if $X$ is additionally \emph{isotropic}, i.e. it is centered and have identity covariance matrix then
\begin{equation}
\label{eq:Paour}
(\Ex|X|^p)^{1/p}\leq C_1(\sqrt{n}+\sigma_X(p)) \quad \mbox{ for }p\geq 1. 
\end{equation}

In this note we show the following modification of the Paouris inequality.

\begin{thm}
\label{thm:cutPaour}
For any isotropic log-concave $n$-dimensional random vector $X$ and $p\geq 1$,
\begin{equation}
\label{eq:cutPaour}
\Ex\Big(\sum_{i=1}^nX_i^2\ind_{\{|X_i|\geq t\}}\Big)^p\leq (C_2\sigma_X(p))^{2p} \quad \mbox{ for } 
t\geq C_2\log\Big(\frac{n}{\sigma_X(p)^2}\Big). 
\end{equation}
\end{thm}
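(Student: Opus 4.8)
The strategy is to reduce to the regime $\sigma_X(p)^2\le n$, to rewrite $S:=\sum_{i=1}^nX_i^2\ind_{\{|X_i|\ge t\}}$ through the number of large coordinates of $X$, and to control its $p$‑th moment by applying Paouris' inequality \eqref{eq:Paour} to coordinate projections. Put $s:=\sigma_X(p)^2$. If $s\ge n$, the hypothesis on $t$ is vacuous and, since then $\sqrt n\le\sigma_X(p)$, the general form of \eqref{eq:Paour} gives at once $\Ex S^p\le\Ex|X|^{2p}\le\big(C_1(\sqrt n+\sigma_X(p))\big)^{2p}\le(2C_1\sigma_X(p))^{2p}$; the same argument works whenever $n\le e\,s$. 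So assume $L:=\log(n/s)>1$, so that the hypothesis reads $t\ge C_2L$ with $C_2$ a universal constant we are free to enlarge. I will freely use: (i) each marginal $\langle u,X\rangle$ is log‑concave of variance $|u|^2$, whence $\Pr(|\langle u,X\rangle|\ge r|u|)\le e^{1-r/C_0}$, $\|\langle u,X\rangle\|_q\le C_0q|u|$, and $\Ex\max_i|X_i|\le C_0\log(en)$; (ii) $\sigma_X(\lambda q)\le C_0\lambda\sigma_X(q)$ for $\lambda,q\ge1$; (iii) \eqref{eq:Paour} for the isotropic log‑concave vectors $X_J=(X_i)_{i\in J}$: $\|\,|X_J|\,\|_q\le C_1(\sqrt{|J|}+\sigma_X(q))$; and (iv) the $\Ex\sup$‑strengthening of \eqref{eq:Paour} available from Paouris' method, which for $M_k(X):=\max_{|J|=k}|X_J|$ (the $\ell_2$‑norm of the $k$ largest coordinates of $X$) yields $\|M_k(X)\|_q\le C\big(\Ex M_k(X)+\sigma_X(q)\big)$, together with the estimate $\Ex M_k(X)\le C\sqrt k\,\log(en/k)$.

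The key algebraic point is that $S=|X_I|^2=M_N(X)^2$, where $I=I(X)=\{i:|X_i|\ge t\}$ is precisely the set of the $N:=|I|=\sum_i\ind_{\{|X_i|\ge t\}}$ largest coordinates of $X$, and that $Nt^2\le S\le|X|^2$, so $N\le|X|^2/t^2$. I would split according to whether $N\le k_0$ or $N>k_0$, where $k_0\asymp C_2^2s/t^2$ (with a harmless additive adjustment to avoid degenerate choices). On $\{N\le k_0\}$ one has $S=M_N(X)^2\le M_{k_0}(X)^2$, so $\Ex\big[\ind_{\{N\le k_0\}}S^p\big]\le\Ex M_{k_0}(X)^{2p}=\|M_{k_0}(X)\|_{2p}^{2p}$; by (iv), $\|M_{k_0}(X)\|_{2p}\le C\big(\sqrt{k_0}\log(en/k_0)+\sigma_X(2p)\big)$, and since $\sqrt{k_0}\lesssim C_2\sqrt s/t$ and $\log(en/k_0)\lesssim 1+L+\log t$, a short computation using $t\ge C_2L$ shows $\sqrt{k_0}\log(en/k_0)\le C'\sqrt s=C'\sigma_X(p)$, whence $\|M_{k_0}(X)\|_{2p}\le C''\sigma_X(p)$ and $\Ex\big[\ind_{\{N\le k_0\}}S^p\big]\le(C''\sigma_X(p))^{2p}$.

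It remains to bound $\Ex\big[\ind_{\{N>k_0\}}S^p\big]$, and here lies the real difficulty. On $\{N>k_0\}$ one has $|X|^2\ge Nt^2>C_2^2s$, hence the crude majorant $\ind_{\{N>k_0\}}S^p\le\ind_{\{|X|>C_2\sqrt s\}}|X|^{2p}$; the point of the whole argument is that the event ``$N>k_0$'' — more than $C_2^2s/t^2$ coordinates exceed the large level $t\ge C_2\log(n/s)$ — is so unlikely that even this wasteful bound integrates to at most $(C\sigma_X(p))^{2p}$. To see this one needs, for each level $\tau\ge t$, a strong upper bound for the exceedance count $N_\tau:=\sum_i\ind_{\{|X_i|\ge\tau\}}$, which I would obtain by combining three estimates of $\Pr(N_\tau\ge k)$: (a) the union bound $\Pr(N_\tau\ge k)\le\binom nk\max_{|J|=k}\Pr(|X_i|\ge\tau\ \forall i\in J)$ together with $\Pr(|X_i|\ge\tau\ \forall i\in J)\le\Pr(|X_J|\ge\sqrt{|J|}\,\tau)\le e^{1-c\sqrt{|J|}\tau/C_0}$, effective for small $k$; (b) the deterministic inequality $N_\tau\le|X|^2/\tau^2$ followed by the Paouris tail of $|X|$, effective for $k\gtrsim n/\tau^2$; and (c), for the intermediate range, the bound $\Pr(|X_i|\ge\tau\ \forall i\in J)\le\tau^{-2m|J|}\Ex\prod_{i\in J}X_i^{2m}\ind_{\{|X_i|\ge\tau\}}\le(|J|\,\tau^2)^{-m|J|}\Ex S_\tau^{m|J|}$ (by the arithmetic–geometric mean inequality, with $S_\tau:=\sum_iX_i^2\ind_{\{|X_i|\ge\tau\}}$), where $\Ex S_\tau^{m|J|}$ is controlled through $S_\tau\le|X|^2$ and \eqref{eq:Paour}, or, more sharply, by a self‑improving use of the inequality being proved at a higher exponent. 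Feeding these into the dyadic telescoping $S_\tau\le4\tau^2N_\tau+S_{2\tau}$, i.e.\ $S\le 4t^2\sum_{j\ge0}4^jN_{2^jt}$, and summing, the hypothesis $t\ge C_2\log(n/\sigma_X(p)^2)$ is exactly what makes every resulting geometric series converge, with total $\le(C\sigma_X(p))^{2p}$.

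The main obstacle is this intermediate range in (c). For a general, non‑product isotropic log‑concave vector the events $\{|X_i|\ge\tau\}$ can be strongly positively correlated — e.g.\ if $X=\xi u+G$ with $\xi$ a centred exponential along a fixed unit vector $u$ and $G$ Gaussian on $u^\perp$, then $\Pr(|X_i|\ge\tau\ \forall i\in J)$ can be as large as $e^{-c\sqrt{|J|}\tau}$, nowhere near $\prod_{i\in J}\Pr(|X_i|\ge\tau)$ — so the union bound of (a) loses against $\binom nk$ precisely in the window $\tau^2/\log^2n\lesssim k\lesssim n/\tau^2$, while the Paouris tail of $|X|$ is not yet effective there. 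Reconciling (a), (b) and (c) across the whole range of $k$ and all dyadic levels, uniformly in $p$, is the technical heart of the proof; the self‑improving estimate (c) is the device that bridges this gap.
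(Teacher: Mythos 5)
Your reduction to the case $\sigma_X(p)^2\le n$ matches the paper, but the two pillars of your argument both fail. First, ingredient (iv) is circular: the claimed ``$\Ex\sup$-strengthening'' $\|M_k(X)\|_q\le C(\sqrt k\log(en/k)+\sigma_X(q))$ is word for word the paper's Theorem~\ref{thm:unifP}, i.e.\ \eqref{eq:unifP}, which is \emph{derived from} the theorem you are proving. It is not ``available from Paouris' method'': the paper points out that the best previously known version (Theorem~3.4 of the cited ALLPT paper) carries an extra exponential factor in the tail, and with that factor your bound on $\Ex[\ind_{\{N\le k_0\}}S^p]$ does not close. A supremum of Euclidean norms over $\binom nk$ coordinate projections is not itself the norm of a log-concave vector, so \eqref{eq:Paour} does not apply to $M_k(X)$ directly, and no general concentration principle gives (iv) for free.

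Second, the case $N>k_0$ --- which you yourself call ``the real difficulty'' --- is not proved. You correctly diagnose that the union bound (a) loses against $\binom nk$ in the intermediate range because the events $\{|X_i|\ge\tau\}$ can be strongly positively correlated, and that the Paouris tail (b) only becomes effective for $k\gtrsim n/\tau^2$; but the bridging device (c) invokes ``a self-improving use of the inequality being proved at a higher exponent'' with no induction scheme (no base case, no parameter that decreases), and you explicitly leave the reconciliation of (a), (b), (c) over all $k$ and all dyadic levels as ``the technical heart of the proof'' without carrying it out. (Also note that the written majorant $\ind_{\{|X|>C_2\sqrt s\}}|X|^{2p}$ has expectation of order $n^p\gg(C\sigma_X(p))^{2p}$, since $|X|\approx\sqrt n$ typically; everything hinges on the unproved smallness of $\Pr(N_\tau\ge k)$ in the intermediate range.) The missing heart is exactly what the paper supplies by a different route: a conditional Paouris inequality for $X$ restricted to a convex event (Proposition~\ref{conditional}), applied inductively along the nested events $\{X_{i_1}\ge 2^{k_1}t,\dots,X_{i_s}\ge 2^{k_s}t\}$ in the expanded multiple sum for the $l$-th moment, combined with the combinatorial count of Lemma~\ref{combf} for the entropy of the profiles of conditional-probability decay. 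Nothing in your outline substitutes for this mechanism, so the argument as it stands does not establish \eqref{eq:cutPaour}.
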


Obviuosly $\sum_{i=1}^nX_i^2\ind_{\{|X_i|\geq t\}}\geq t^2N_X(t)$, where
\[
N_X(t):=\sum_{i=1}^n\ind_{\{|X_i|\geq t\}},\quad t>0,
\]
thus \eqref{eq:cutPaour} generalizes the estimate derived in \cite{ALLPT}:
\[
\Ex(t^2N_X(t))^p\leq (C\sigma_X(p))^{2p} \quad \mbox{ for } t\geq C\log\Big(\frac{n}{\sigma_X(p)^2}\Big).  
\]

It is also not hard to see that Theorem \ref{thm:cutPaour} implies Paouris inequality \eqref{eq:Paour}. To see this
let $p':=\inf\{q\geq p\colon\ \sigma_X(q)\geq \sqrt{n}\}$. Then 
\[
(\Ex|X|^p)^{1/p}\leq (\Ex|X|^{2p'})^{1/2p'}\leq C_2\sigma_X(p')\leq C_2(\sqrt{n}+\sigma_X(p)),
\]
where the second inequality follows by \eqref{eq:cutPaour} aplied with $p=p'$ and $t=0$. 

In fact we may extend estimate \eqref{eq:Paour} replacing the Euclidean norm by the $l_r$-norm, 
$\|x\|_r:=(\sum_{i}|x_i|^r)^{1/r}$, $r\geq 2$.

\begin{thm}
\label{thm:lrPaour}
For any $r\geq 2$ and any isotropic log-concave $n$-dimensional random vector $X$, 
\begin{equation}
\label{eq:lrPaour}
(\Ex\|X\|_r^p)^{1/p}\leq C_3(rn^{1/r}+\sigma_X(p)) \quad \mbox{ for }p\geq 1.
\end{equation}
\end{thm}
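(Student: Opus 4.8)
The plan is to derive \eqref{eq:lrPaour} from Theorem~\ref{thm:cutPaour} by cutting $\|X\|_r^r=\sum_i|X_i|^r$ at a threshold and estimating the two pieces by quite different means. First, some cheap reductions. If $p\le r$ then $(\Ex\|X\|_r^p)^{1/p}\le(\Ex\|X\|_r^r)^{1/r}=(\sum_i\Ex|X_i|^r)^{1/r}$, and since each $X_i$ is one--dimensional log-concave with $\Ex X_i^2=1$ one has $\Ex|X_i|^r\le(Cr)^r$, so this is at most $Crn^{1/r}$. If $\sigma_X(p)\ge\sqrt n$ then $\|X\|_r\le|X|$ and \eqref{eq:Paour} gives the bound $C_1(\sqrt n+\sigma_X(p))\le 2C_1\sigma_X(p)$; and if $r=2$ the claim is \eqref{eq:Paour} itself, since then $rn^{1/r}=2\sqrt n$. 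So assume $p>r>2$ and $\sigma_X(p)<\sqrt n$, and put $\tau:=C_2\log(n/\sigma_X(p)^2)>0$. Write $\|X\|_r^r=A+B$ with $A=\sum_i|X_i|^r\ind_{\{|X_i|<\tau\}}$ and $B=\sum_i|X_i|^r\ind_{\{|X_i|\ge\tau\}}$, so $\|X\|_r\le A^{1/r}+B^{1/r}$ and it suffices to bound each of $\|A^{1/r}\|_{L^p}$ and $\|B^{1/r}\|_{L^p}$ by $C(rn^{1/r}+\sigma_X(p))$. For $B$: since $r\ge2$, $B^{1/r}\le S(\tau)^{1/2}$ where $S(t):=\sum_iX_i^2\ind_{\{|X_i|\ge t\}}$, and Theorem~\ref{thm:cutPaour} with exponent $p$ at the admissible threshold $\tau$ (combined with $\Ex S(\tau)^{p/2}\le(\Ex S(\tau)^p)^{1/2}$) gives $\|B^{1/r}\|_{L^p}\le C_2\sigma_X(p)$.

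The heart of the matter is $\|A^{1/r}\|_{L^p}=\|A\|_{L^{p/r}}^{1/r}$, with $p/r>1$. The naive estimate $A\le\tau^{r-2}|X|^2$ is useless here: bounding $\||X|^{2/r}\|_{L^p}$ via \eqref{eq:Paour} produces a factor $\sqrt n$, which for large $r$ is far bigger than the target $rn^{1/r}$, so the small coordinates must be handled scale by scale. If $\tau\le Cr$ one uses the triangle inequality in $L^{p/r}$ together with $\Ex|X_i|^p\ind_{\{|X_i|<\tau\}}\le\tau^{p-r}\Ex|X_i|^r\le\tau^{p-r}(Cr)^r$, which gives $\|A\|_{L^{p/r}}\le n\tau^{r-r^2/p}(Cr)^{r^2/p}$, hence (using $p>r$) $\|A^{1/r}\|_{L^p}\le Crn^{1/r}$. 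If $\tau>Cr$ one splits $A=A_1+A_2$, where $A_1$ collects the coordinates with $|X_i|<Cr$ (bounded by $Crn^{1/r}$ exactly as above) and $A_2$ is the shell $Cr\le|X_i|<\tau$, which one decomposes dyadically along $u_k:=Cr\cdot2^{k/(r-2)}$: on $\{u_k\le|X_i|<u_{k+1}\}$ one has $|X_i|^r\le u_{k+1}^{r-2}X_i^2=2u_k^{r-2}X_i^2$, so $A_2\le 2\sum_k u_k^{r-2}S(u_k)$ and $\|A_2\|_{L^{p/r}}\le 2\sum_k u_k^{r-2}\|S(u_k)\|_{L^{p/r}}$. (The dyadic ratio $2^{1/(r-2)}$ is chosen precisely so that the geometric sums below have constants free of $r$.)

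It remains to bound $\|S(u)\|_{L^{p/r}}$ at each scale $u=u_k$. Apply Theorem~\ref{thm:cutPaour} with an exponent $q$ for which $u$ is admissible, i.e. $q\ge p/r$ and $\sigma_X(q)\ge\sqrt n\,e^{-u/(2C_2)}$; such a $q$ exists for every $u>0$ because $\sup_{q\ge1}\sigma_X(q)=\mathrm{ess\,sup}\,|X|\ge\sqrt n$ (as $\Ex|X|^2=n$), and the minimal such $q$ satisfies $\sigma_X(q)\le\max\{\sigma_X(p/r),\sqrt n\,e^{-u/(2C_2)}\}$, so that $\|S(u)\|_{L^{p/r}}\le C_2^2(\sigma_X(p)^2+ne^{-u/C_2})$. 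Summing over the $\approx(r-2)\log_2(\tau/Cr)$ dyadic scales: the $\sigma_X(p)^2$-part is $\lesssim(\sigma_X(p)^2\tau^{r-2})^{1/r}\le\sigma_X(p)+\tau\lesssim\sigma_X(p)+rn^{1/r}$, where the last step uses $\log n\le rn^{1/r}/e$; and, once $C$ is chosen large relative to $C_2$, consecutive terms of the $ne^{-u/C_2}$-part decrease geometrically, so that part is $\lesssim(n(Cr)^{r-2}e^{-Cr/C_2})^{1/r}\lesssim rn^{1/r}$. Putting together the bounds for $B$, $A_1$, $A_2$ yields $(\Ex\|X\|_r^p)^{1/p}\le C_3(rn^{1/r}+\sigma_X(p))$.

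The one genuinely delicate point is the term $A$: it cannot be treated through the Euclidean norm without paying a spurious $\sqrt n$, so the coordinates of intermediate size --- roughly those with $r\lesssim|X_i|\lesssim\log(n/\sigma_X(p)^2)$ --- have to be stripped off one dyadic layer at a time, with the exponent used in Theorem~\ref{thm:cutPaour} increasing with the layer. Checking that these per-layer exponents exist (this is where $\mathrm{ess\,sup}\,|X|\ge\sqrt n$ enters) and that the resulting per-layer estimates sum up with constants independent of $r$ is the main technical work; the rest is bookkeeping.
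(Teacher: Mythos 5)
Your proof is correct, but it takes a genuinely different route from the paper. The paper deduces Theorem \ref{thm:lrPaour} from a joint tail estimate for the order statistics $X^*_{l_1},\dots,X^*_{l_k}$ (Theorem \ref{thm:jointos}, obtained from Theorem \ref{thm:cutPaour} by a single application of Chebyshev with $p=\sigma_X^{-1}(e^{-1/2}u/C_2)$), then writes $\|X\|_r^r\le\sum_j 2^{j-1}|X^*_{2^{j-1}}|^r$, takes a union bound over integer compositions via the combinatorial Lemma \ref{combf}-style count $|I_s|\le n^2$ in Lemma \ref{lem:prsum}, and finishes by integration by parts; i.e.\ the decomposition is dyadic \emph{in rank}. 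You instead decompose dyadically \emph{in value}: small coordinates $|X_i|<Cr$ are handled by one-dimensional moment bounds, large coordinates $|X_i|\ge\tau$ by Theorem \ref{thm:cutPaour} at exponent $p$, and the intermediate shells $u_k\le|X_i|<u_{k+1}$ by Theorem \ref{thm:cutPaour} applied at each threshold $u_k$ with a layer-dependent exponent $q=q(u_k)$ chosen minimal so that $u_k$ is admissible, everything being summed directly in $L^{p/r}$ by the triangle inequality. The key common mechanism is the same — enlarging the exponent in Theorem \ref{thm:cutPaour} until a given threshold becomes admissible, at the price $\sigma_X(q)\approx\sqrt n e^{-u/(2C_2)}$ — and your verification that such $q$ exists (via $\sup_q\sigma_X(q)=\operatorname{ess\,sup}|X|\ge\sqrt n$) and that the layer constants sum geometrically is sound; so is the choice of ratio $2^{1/(r-2)}$ keeping all constants uniform in $r$. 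What the paper's route buys is reusability: Theorem \ref{thm:jointos} and Corollary \ref{cor:jointos} also drive Theorem \ref{thm:unifP} and are of independent interest, whereas your argument is tailored to $\|\cdot\|_r$. What your route buys is a shorter, self-contained moment argument for Theorem \ref{thm:lrPaour} alone: no order statistics, no combinatorial union bound, and no passage through tail bounds and integration by parts.
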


Theorem \ref{thm:lrPaour} gives better bounds than presented in \cite{La}, since the constant
does not explode for $r\rightarrow 2+$ and the parameter $p$ is replaced by the smaller quantity $\sigma_X(p)$.
Estimate \eqref{eq:lrPaour} and Chebyshev's inequality imply for $t\geq 1$,
\[
\Pr(\|X\|_r\geq 2eC_3trn^{1/r})\leq \exp(-\sigma_{X}^{-1}(trn^{1/r})).
\]

In general \eqref{eq:lrPaour} is sharp up to a multiplicative constant, since for a random vector $X$ with 
i.i.d. symmetric exponential coordinates with variance $1$ we have $\sigma_X(p)\leq p\sigma_X(2)=p$
and
\[
(\Ex\|X\|_r^p)^{1/p} \geq \max\{\Ex\|X\|_r,(\Ex|X_1|^p)^{1/p})\geq \frac{1}{C}\max\{rn^{1/r},p\}.
\]
However there are reasons to believe that the following stronger estimate may hold for log-concave vectors 
(c.f. \cite{La2}) 
\[
(\Ex\|X\|_r^p)^{1/p} \leq C\Big(\Ex\|X\|_r+\sup_{\|t\|_{r'}\leq 1}(\Ex|\langle t,X\rangle|^p)^{1/p}\Big).
\]

\medskip

Another consequence of Theorem \ref{eq:cutPaour} is the uniform version of the Paouris inequality.
For $I\subset\{1,\ldots,n\}$ by $P_I$ we denote the coordinate projection from $\er^n$ into $\er^I$.

\begin{thm}
\label{thm:unifP}
For any isotropic log-concave $n$-dimensional random vector $X$ and $1\leq m\leq n$ we have
\begin{equation}
\label{eq:unifP}
\Big(\Ex\max_{|I|=m}|P_IX|^p\Big)^{1/p}\leq C_4\Big(\sqrt{m}\log\Big(\frac{en}{m}\Big)+\sigma_X(p)\Big) 
\quad \mbox{ for }p\geq 1.
\end{equation}
\end{thm}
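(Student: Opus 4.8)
The plan is to derive Theorem~\ref{thm:unifP} from Theorem~\ref{thm:cutPaour} by a threshold decomposition. The starting point is the elementary identity
\[
\max_{|I|=m}|P_IX|^2=\sum_{j=1}^m (X_j^*)^2,
\]
where $X_1^*\ge X_2^*\ge\cdots$ is the non-increasing rearrangement of $(|X_1|,\dots,|X_n|)$, since the maximising $I$ just picks out the $m$ coordinates with the largest $|X_i|$. Splitting the indices according to whether $|X_i|\ge t$ or $|X_i|<t$, the coordinates below the threshold contribute at most $mt^2$ and the rest at most $\sum_{i=1}^nX_i^2\ind_{\{|X_i|\ge t\}}$, so that, for every $t\ge0$,
\[
\max_{|I|=m}|P_IX|^2\le mt^2+\sum_{i=1}^nX_i^2\ind_{\{|X_i|\ge t\}}.
\]
I may assume $p\ge2$: for $1\le p<2$ one bounds $(\Ex\max_{|I|=m}|P_IX|^p)^{1/p}$ by the $p=2$ case and uses that $\sigma_X(2)\le C\sigma_X(p)$ for isotropic log-concave $X$. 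Taking $L^{p/2}$-norms in the last display and applying the triangle inequality in $L^{p/2}$ yields
\[
\Big(\Ex\max_{|I|=m}|P_IX|^p\Big)^{2/p}\le mt^2+\Big(\Ex\Big(\sum_{i=1}^nX_i^2\ind_{\{|X_i|\ge t\}}\Big)^{p/2}\Big)^{2/p}.
\]

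Next I would invoke Theorem~\ref{thm:cutPaour}, but with an exponent $q\ge p/2$ kept free for the moment (using that $r\mapsto(\Ex Z^r)^{1/r}$ is non-decreasing): for $t\ge C_2\log(n/\sigma_X(q)^2)$ it gives $(\Ex(\sum_iX_i^2\ind_{\{|X_i|\ge t\}})^{p/2})^{2/p}\le(C_2\sigma_X(q))^2$, hence
\[
\Big(\Ex\max_{|I|=m}|P_IX|^p\Big)^{1/p}\le\sqrt m\,t+C_2\sigma_X(q),\qquad t:=C_2\Big(\log\frac{n}{\sigma_X(q)^2}\Big)_+.
\]
The point of the free parameter is that enlarging $q$ shrinks the threshold $t$ at the expense of a larger $\sigma_X(q)$. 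If $\sigma_X(p/2)^2\ge m$ I would simply take $q=p/2$: then $t\le C_2\log(n/m)\le C_2\log(en/m)$ and $\sigma_X(q)\le\sigma_X(p)$, which is exactly~\eqref{eq:unifP}.

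The regime $\sigma_X(p/2)^2<m$ is what I expect to be the real obstacle: there the choice $q=p/2$ produces a useless threshold, potentially as large as $C_2\log n$. The remedy is to pick $q\ge p/2$ with $m\le\sigma_X(q)^2\le C^2m$, where $C$ is the universal constant in the log-concave regularity bound $\sigma_X(2s)\le C\sigma_X(s)$. Such a $q$ exists because $\sigma_X$ is non-decreasing with $\sup_{s\ge1}\sigma_X(s)\ge\sqrt n\ge\sqrt m$ — this is where isotropy enters, since $X$ lies almost surely in the Euclidean ball of radius $\sup_{s\ge1}\sigma_X(s)$ while $\Ex|X|^2=n$ — so running through the dyadic values $\sigma_X(2^{k}p/2)$, $k=0,1,2,\dots$, the sequence must first exceed $\sqrt m$ at some step, overshooting by a factor at most $C$ by the regularity bound. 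For this $q$ one has $\sigma_X(q)^2\ge m$, hence $t\le C_2\log(n/m)\le C_2\log(en/m)$, and $\sigma_X(q)\le C\sqrt m\le C\sqrt m\log(en/m)$ since $\log(en/m)\ge1$; substituting into the displayed inequality gives~\eqref{eq:unifP}. It then remains to dispose of the degenerate sub-cases $\sigma_X(q)^2\ge n$ (take $t=0$, so the threshold term disappears) and $m$ comparable to $n$ (where $\log(en/m)\asymp1$), both of which only strengthen the estimate. Beyond Theorem~\ref{thm:cutPaour}, the argument uses only the monotonicity and regularity of $s\mapsto\sigma_X(s)$ for log-concave vectors together with the isotropy identity $\Ex|X|^2=n$.
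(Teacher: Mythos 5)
Your proof is correct, and it takes a genuinely different, more direct route than the paper's. The paper deduces Theorem \ref{thm:unifP} from the joint order-statistics machinery built for Theorem \ref{thm:lrPaour}: it writes $\max_{|I|=m}|P_IX|^2=\sum_{j\leq m}(X_j^*)^2\leq\sum_j2^{j-1}(X_{2^{j-1}}^*)^2$, bounds each $X_{2^{j-1}}^*$ by a \emph{level-dependent} threshold $C_7\log(4n2^{-j})$ plus an excess $Y_j$, and controls the joint tails of the $Y_j$ via Theorem \ref{thm:jointos}, Corollary \ref{cor:jointos} and Lemma \ref{lem:prsum}, finishing by integration by parts. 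You instead use the single flat-threshold pointwise bound $\max_{|I|=m}|P_IX|^2\leq mt^2+\sum_iX_i^2\ind_{\{|X_i|\geq t\}}$ and apply Theorem \ref{thm:cutPaour} at a moment level $q\geq p/2$ tuned so that $\sigma_X(q)^2\asymp\max(m,\sigma_X(p/2)^2)$, which makes the admissible threshold $t\lesssim\log(en/m)$ while keeping $\sigma_X(q)\lesssim\sqrt m+\sigma_X(p)$. For the Euclidean norm of projections this loses nothing, since $\sum_{k\leq m}\log^2(n/k)\asymp m\log^2(en/m)$, i.e.\ the flat threshold costs the same as the decreasing one; the finer level-dependent thresholds of Corollary \ref{cor:jointos} are genuinely needed only for the $l_r$ result, where the top order statistics carry much more weight. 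So your argument is shorter and bypasses Theorem \ref{thm:jointos} entirely. Two small points deserve a line each: (i) existence of the level $q$ in the regime $\sigma_X(p/2)^2<m$ --- if $\sigma_X(2^kp/2)^2<m$ for every $k$ then $\sup_s\sigma_X(s)^2\leq m$, which combined with $\sup_s\sigma_X(s)\geq\sqrt n$ forces $m=n$, a case already covered by Paouris's inequality \eqref{eq:Paour}; and (ii) the regularity $\sigma_X(2s)\leq C\sigma_X(s)$ down to $s=1$, which follows from the paper's $\sigma_X(\lambda p)\leq2\lambda\sigma_X(p)$ for $p\geq2$ together with the reverse H\"older inequality $\sigma_X(2)\leq C\sigma_X(1)$ for log-concave variables. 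Both are routine.
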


Again the example of a vector with the product isotropic exponential distribution shows that 
in general estimate \eqref{eq:unifP}
is sharp. Theorem \ref{thm:unifP} and Chebyshev's inequality yield for $t\geq 1$,
\[
\Pr\Big(\max_{|I|=m}|P_IX|\geq 2eC_4t\sqrt{m}\log\Big(\frac{en}{m}\Big)\Big)\leq
\exp\Big(-\sigma_X^{-1}\Big(t\sqrt{m}\log\Big(\frac{en}{m}\Big)\Big)\Big),
\]
which removes an exponential factor from Theorem 3.4 in \cite{ALLPT}.

\medskip

The paper is organised as follows. In Section 2 we recall basic facts about log-concave vectors and prove Theorem
\ref{thm:cutPaour}. In Section 3 we show how to use \eqref{eq:cutPaour} to get estimates for the joint distribution of
order statistics of $X$ and derive Theorems \ref{thm:lrPaour} and \ref{thm:unifP}.    

{\bf Notation.} For a r.v. $Y$ and $p>0$ we set $\|Y\|_p:=(\Ex|Y|^p)^{1/p}$. We write $|I|$ for 
the cardinality of a set $I$. By a letter $C$ we denote
absolute constants, value of $C$ may differ at each occurence. Whenever we want to fix a value of an absolute
constant we use letters $C_1,C_2,\ldots$.

\section{Proof of Theorem \ref{thm:cutPaour}}

The result of Barlow, Marshall and Proschan \cite{BMP} imply that for symmetric log-concave random variables $Y$,
and $p\geq q>0$, $\|Y\|_p\leq \Gamma(p+1)^{1/p}/\Gamma(q+1)^{1/q}\|Y\|_q$.
If $Y$ is centered and log-concave and $Y'$ is
an independent copy of $Y$ then $Y-Y'$ is symmetric and log-concave, hence for $p\geq q\geq 2$,
\[
\|Y\|_p\leq \|Y-Y'\|_p\leq \frac{\Gamma(p+1)^{1/p}}{\Gamma(q+1)^{1/q}}\|Y-Y'\|_q\leq
2\frac{\Gamma(p+1)^{1/p}}{\Gamma(q+1)^{1/q}}\|Y\|_q\leq 2\frac{p}{q}\|Y\|_q.
\]
Thus for isotropic log-concave vectors $X$,
\[
\sigma_X(\lambda p)\leq 2\lambda\sigma_X(p)\quad \mbox{ and }\quad 
\sigma_X^{-1}(\lambda t)\geq \frac{\lambda}{2}\sigma_X^{-1}(t)
\quad \mbox{ for }p\geq 2,\ t,\lambda\geq 1.
\]
In particular $\sigma_X(p)\leq p$ for $p\geq 2$.

If $Y$ is a log-concave r.v.\ (not necessarily centered) then for $p\geq 2$, 
$\|Y\|_p\leq |\Ex Y|+\|Y-Y'\|_p\leq (p+1)\|Y\|_2$ and Chebyshev's inequality yields
$\Pr(|Y|\geq e(p+1)\|Y\|_2)\leq e^{-p}$. Thus we obtain a $\Psi_1$-estimate for log-concave r.v's
\begin{equation}
\label{eq:psi1}
\Pr(|Y|\geq t)\leq \exp\Big(2-\frac{t}{2e\|Y\|_2}\Big) \quad \mbox{ for }t\geq 0.
\end{equation}

We start with a variant of Proposition 7.1 from \cite{ALLPT}.

\begin{prop}
\label{conditional} 
There exists an absolute positive constant $C_5$  such that the following holds. Let $X$ be an isotropic log-concave
$n$-dimensional random vector, $A=\{X\in K\}$, where $K$ is a convex set in $\er^n$ satisfying $0<\Pr(A)\leq 1/e$. 
Then for every $t\geq 1$,  
\begin{equation}
\label{eq:cond1}
\sum_{i=1}^n\Ex X_i^2\ind_{A\cap\{X_i\geq t\}}\leq 
C_5\Pr(A)\Big(\sigma_X^2(-\log(\Pr(A)))+nt^2e^{-t/C_5}\Big) 
\end{equation}
and for every $t>0$, $u\geq 1$, 
\begin{align}
\notag
\sum_{k=0}^{\infty}4^k|\{i\leq n\colon\ \Pr(A\cap\{X_i\geq &2^k t\})\geq e^{-u}\Pr(A)\}|
\\
\label{eq:cond2}
&\leq
\frac{C_5 u^2}{t^2}\Big(\sigma_X^2(-\log(\Pr(A)))+n\ind_{\{t\leq uC_5\}}\Big) .
\end{align}
\end{prop}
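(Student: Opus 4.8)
The plan is to analyze the conditioned vector $X$ restricted to the event $A=\{X\in K\}$ and to exploit that this conditioned distribution is again log-concave. Write $\mu=\Pr(A)$, so $-\log\mu\geq 1$, and set $q:=-\log\mu$. The key structural fact is that the measure $\Pr(\cdot\mid A)$ is log-concave (intersection with a convex set preserves log-concavity), so one-dimensional marginals $\langle t,X\rangle$ conditioned on $A$ satisfy the $\Psi_1$-estimate \eqref{eq:psi1} with constant controlled by their conditional second moment. The central mechanism, as in Proposition~7.1 of \cite{ALLPT}, is to compare the conditional moments $\Ex[\langle t,X\rangle^2\ind_A]$ to the unconditional ones via the small probability $\mu$: either the relevant quantity is governed by $\sigma_X^2(q)$ (the ``typical'' regime controlled by Paouris-type behaviour at level $q=-\log\mu$), or by a term that decays exponentially in $t$, reflecting that forcing a coordinate to be large is expensive.

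For \eqref{eq:cond1}, I would split each term $\Ex X_i^2\ind_{A\cap\{X_i\geq t\}}$ using the conditional $\Psi_1$-bound: integrating the tail $\Pr(X_i\geq s\mid A)\leq \exp(2-cs/\|X_i\|_{2,A})$ where $\|X_i\|_{2,A}^2=\Ex[X_i^2\mid A]$. One then needs a bound on $\sum_i \Ex[X_i^2\mid A]$, i.e. on $\Ex[|X|^2\mid A]$; this is where the Paouris inequality \eqref{eq:Paour} (or its $L_p$-moment form) enters, applied at exponent $\sim q=-\log\mu$, giving $\Ex[|X|^2\ind_A]\lesssim \mu(\sigma_X^2(q)+n)$ — and the crude bound $\Ex[X_i^2\mid A]\lesssim$ something like $n/\mu$ in the worst case, which is what produces the $nt^2e^{-t/C_5}$ term after the exponential tail integration kills all but an exponentially small portion when $t$ is large relative to the per-coordinate scale. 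The delicate point is to organize the dichotomy so the two error terms ($\sigma_X^2(q)$ and $nt^2e^{-t/C_5}$) appear cleanly; I would handle coordinates with small conditional variance (where the $\Psi_1$ tail already gives $e^{-t/C}$ decay) separately from those with large conditional variance (few of them, controlled by the total second moment bound).

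For \eqref{eq:cond2}, the dyadic sum over $k$ counting indices $i$ with $\Pr(A\cap\{X_i\geq 2^kt\})\geq e^{-u}\Pr(A)$ is essentially a weighted level-set count. I would note that $\Pr(X_i\geq 2^kt\mid A)\geq e^{-u}$ forces, by the conditional $\Psi_1$-estimate, that $\|X_i\|_{2,A}\gtrsim 2^kt/u$, hence $\Ex[X_i^2\mid A]\gtrsim 4^k t^2/u^2$. Summing over all such $i$ and all $k$, and bounding $\sum_i\Ex[X_i^2\mid A]=\Ex[|X|^2\mid A]\lesssim \sigma_X^2(q)+n$ (again via Paouris at exponent $q$, with the indicator $\ind_{\{t\leq uC_5\}}$ absorbing the regime where the trivial bound $n$ is needed versus where $t$ large makes the whole count vanish), gives exactly the right-hand side of \eqref{eq:cond2} after multiplying through by $u^2/t^2$.

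The main obstacle I anticipate is controlling $\Ex[|X|^2\ind_A]$ — i.e. getting the factor $\mu(\sigma_X^2(-\log\mu)+n)$ rather than something worse — uniformly over all convex $K$ with $\Pr(A)\le 1/e$, and correctly calibrating the threshold at which the $nt^2e^{-t/C_5}$ term (resp. the $n\ind_{\{t\le uC_5\}}$ term) can be dropped. This requires the Paouris inequality applied at the exponent $p\sim -\log\Pr(A)$ together with the regularity $\sigma_X(\lambda p)\le 2\lambda\sigma_X(p)$ established above, and careful bookkeeping of absolute constants so that a single $C_5$ works in both the exponent $e^{-t/C_5}$ and the prefactors. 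The rest is a fairly mechanical tail integration and dyadic summation.
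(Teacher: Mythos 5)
Your proposal correctly identifies the conditioned vector $Y$ (i.e.\ $X$ given $A$) as log-concave, the per-coordinate $\Psi_1$ tail bound, and the dichotomy between coordinates with small versus large conditional second moment. That skeleton matches the paper. However, there is a genuine gap in the mechanism you propose for controlling the large-variance coordinates, and without fixing it the proof does not close.

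You propose to bound the contribution of the large-variance set $I:=\{i:\ \Ex[X_i^2\mid A]\gtrsim 1\}$ by the total conditional second moment, using H\"older and Paouris at level $q=-\log\Pr(A)$:
\[
\sum_{i\in I}\Ex[X_i^2\mid A]\ \leq\ \Ex[|X|^2\mid A]\ \lesssim\ \sigma_X^2(q)+n.
\]
This inequality is correct, but the extra additive $n$ is fatal: plugging it in yields $\Pr(A)\bigl(\sigma_X^2(q)+n+nt^2e^{-t/C}\bigr)$ for \eqref{eq:cond1} and $\tfrac{u^2}{t^2}\bigl(\sigma_X^2(q)+n\bigr)$ for \eqref{eq:cond2}, neither of which matches the statement, since the $n$ must carry the factor $t^2e^{-t/C_5}$ (resp.\ the indicator $\ind_{\{t\le uC_5\}}$). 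The $+n$ coming from $\Ex[|X|^2\mid A]$ is unavoidable by your route --- the conditional mean-square of a typical coordinate is still of order $1$, and summing over all $n$ of them gives $n$ no matter how one refines the H\"older step. What is needed is a bound on $\sum_{i\in I}\Ex[X_i^2\mid A]$ alone, by $C\sigma_X^2(q)$ \emph{with no $n$}, and your argument does not supply one.

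The paper achieves this with an idea that is absent from your sketch: apply the Paley--Zygmund inequality to $\sum_{i\in I}Y_i^2$ (the fourth-to-second moment comparison uses log-concavity of $Y$) to get $\Pr\bigl(\sum_{i\in I}Y_i^2\geq \tfrac12\sum_{i\in I}\Ex Y_i^2\bigr)\geq 1/C_6$; then transfer back to $X$ via $\Pr(\cdot)\geq \Pr(A)\Pr(Y\in\cdot)$, and finally apply the Paouris tail bound \eqref{eq:Pa} to the projection $P_IX$ rather than to $X$ itself. The crucial point is that the threshold $i\in I\Leftrightarrow\Ex Y_i^2\geq 2(2eC_1)^2$ guarantees $\tfrac12\sum_{i\in I}\Ex Y_i^2\geq (2eC_1)^2|I|$, which is exactly the regime in which \eqref{eq:Pa} has no $\sqrt{|I|}$ (hence no $\sqrt{n}$) term. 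Comparing the resulting lower bound $\tfrac1{C_6}\Pr(A)$ with the Paouris upper bound $\exp\bigl(-\sigma_X^{-1}(\cdot)\bigr)$ and inverting yields $\sum_{i\in I}\Ex Y_i^2\leq C\sigma_X^2(q)$. That step --- Paley--Zygmund on the restricted vector combined with Paouris on $P_IX$ at a threshold dictated by the definition of $I$ --- is the heart of the proposition and the piece your proposal is missing; the rest (the $\Psi_1$ integration for $i\notin I$ in \eqref{eq:cond1}, the dyadic count via $\sum_k 4^k\ind_{\{\|Y_i\|_2\geq 2^kt/(6eu)\}}\lesssim u^2t^{-2}\Ex Y_i^2$ in \eqref{eq:cond2}, and the observation that $i\notin I$ cannot contribute to \eqref{eq:cond2} once $t>uC_5$) matches your outline and is straightforward.
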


\begin{proof}
Let $Y$ be a random vector defined by 
\[
\Pr(Y\in B)=\frac{\Pr(A\cap\{X\in B\})}{\Pr(A)}=\frac{\Pr(X\in B\cap K)}{\Pr(X\in K)},  
\]
i.e. $Y$ is distributed as $X$ conditioned on $A$. Clearly, for every measurable set $B$  
one has $\Pr(X\in B)\geq \Pr(A)\Pr(Y\in B)$. It is easy to see that $Y$ is log-concave, but not necessarily isotropic. 

The Paouris inequality \eqref{eq:Paour} (applied for a vector $P_IX$) implies that for any $\emptyset\neq I\subset \{1,\ldots,n\}$ and $t\geq (2eC_1)^2|I|$,
\begin{equation}
\label{eq:Pa}
\Pr\Big(\sum_{i\in I}X_i^2\geq t\Big)=\Pr(|P_IX|\geq \sqrt{t})\leq \exp\Big(-\sigma_X^{-1}\Big(\frac{1}{2eC_1}\sqrt{t}\Big)\Big).
\end{equation}

Let 
\[
I:=\{i\leq n\colon\ \Ex Y_i^2\geq 2(2eC_1)^2\}. 
\]
Using the Paley-Zygmund inequality and log-concavity of $Y$, we get 
\[
\Pr\Big(\sum_{i\in I} Y_i^2\geq \frac{1}{2}\sum_{i\in I}\Ex Y_i^2\Big)
\geq
\frac{1}{4}\frac{(\Ex \sum_{i\in I} Y_i^2)^2}{\Ex(\sum_{i\in I} Y_i^2)^2}\geq  
\frac{1}{C_6}.
\]
Therefore 
\[
\Pr\Big(\sum_{i\in I} X_i^2\geq \frac{1}{2}\sum_{i\in I}\Ex Y_i^2\Big)\geq 
\Pr(A)\,  \Pr\Big(\sum_{i\in I} Y_i^2\geq \frac{1}{2}\sum_{i\in I}\Ex Y_i^2\Big)\geq 
\frac{1}{C_6}\Pr(A).
\]
Together with \eqref{eq:Pa} this gives 
\[
\frac{1}{C_6}\Pr(A)\leq \exp\Big(-\sigma_X^{-1}\Big(\frac{1}{2eC_1}\sqrt{\frac{1}{2}\sum_{i\in I}\Ex Y_i^2}\Big)\Big),
\]
hence
\[
\sum_{i\in I}\Ex Y_i^2\leq C\sigma_X^2(-\log\Pr(A)).
\]
Moreover if $i\notin I$, i.e. $\Ex Y_i^2\leq 2(2eC_1)^2$ then \eqref{eq:psi1} yields 
$\Ex Y_i^2\ind_{\{|Y_i|\geq t\}}\leq Ct^2e^{-t/C}$ for $t\geq 1$. Therefore
\begin{align*}
\sum_{i=1}^n\Ex X_i^2\ind_{A\cap\{|X_i|\geq t\}}&=\Pr(A)\sum_{i=1}^n\Ex Y_i^2\ind_{\{|Y_i|\geq t\}}\leq
\Pr(A)\Big(\sum_{i\in I}\Ex Y_i^2+nCt^2e^{-t/C})
\\
&\leq C\Pr(A)\Big(\sigma_X^2(-\log(\Pr(A)))+nt^2e^{-t/C}\Big). 
\end{align*}

To show \eqref{eq:cond2} note first that for every $i$ the random variable $Y_i$ is log-concave, hence for $s\geq 0$,
\[
\frac{\Pr(A\cap\{X_i\geq s\})}{\Pr(A)}=
\Pr(Y_i\geq s)\leq \exp\Big(2-\frac{t}{2e\|Y_i\|_2}\Big).
\]
Thus, if $\Pr(A\cap\{X_i\geq 2^kt\})\geq e^{-u}\Pr(A)$ and $u\geq 1$ then $\|Y_i\|_2\geq
2^kt/(2e (u+2))\geq 2^kt/(6eu)$.  In particular it cannot happen if  $i\notin I$, $k\geq 0$ and $u\leq t/C_5$ with 
$C_5$ large enough. 

Therefore
\begin{align*}
\sum_{k=0}^{\infty}4^k|\{i\leq n\colon\ &\Pr(A\cap\{X_i\geq 2^k t\})\geq e^{-u}\Pr(A)\}|
\\
&\leq
\Big(\sum_{i\in I}+\ind_{\{t\leq uC_5\}}\sum_{i\notin I}\Big)\sum_{k=0}^{\infty}4^k\ind_{\{(\Ex Y_i^2)^{1/2}\geq 2^kt/(6eu)\}}
\\
&\leq \frac{2(6eu)^2}{t^2}\Big(\sum_{i\in I}+\ind_{\{t\leq uC_5\}}\sum_{i\notin I}\Big)\Ex Y_i^2
\\
&\leq \frac{Cu^2}{t^2}\Big(\sigma_X^2(-\log(\Pr(A)))+n\ind_{\{t\leq uC_5\}}\Big).
\end{align*}

\end{proof}

We will also use the following simple combinatorial lemma (Lemma~11 in \cite{La}).

\begin{lem}
\label{combf}
Let $\ell_0\geq \ell_1\geq\ldots\geq \ell_s$ be a fixed sequence of
positive integers and
\[
{\cal  F}:=\Big\{f\colon\{1,2,\ldots,l_0\} \rightarrow\{0,1,2,\ldots,s\}\colon\ 
\forall_{1\leq i\leq s}\ |\{r\colon f(r)\geq i\}|\leq l_i\Big\}.
\]
Then
\[
|{\cal F}|\leq\prod_{i=1}^s \Big(\frac{e l_{i-1}}{l_i}\Big)^{l_i}.
\]
\end{lem}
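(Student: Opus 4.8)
The plan is to encode each $f\in\mathcal{F}$ by the decreasing chain of its super-level sets and then count such chains greedily, one layer at a time.

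First I would associate to $f\in\mathcal{F}$ the sets $A_i:=\{r\colon f(r)\geq i\}$ for $1\leq i\leq s$. Since $f$ takes values in $\{0,1,\ldots,s\}$, these form a chain $A_1\supseteq A_2\supseteq\cdots\supseteq A_s$ of subsets of $\{1,\ldots,\ell_0\}$, and the defining condition of $\mathcal{F}$ is exactly $|A_i|\leq \ell_i$. Conversely, $f$ is recovered from the chain via $f(r)=\max\{i\geq 1\colon r\in A_i\}$ (with the convention that the maximum of the empty set is $0$), and one checks directly that $\{r\colon f(r)\geq i\}=A_i$, so every chain $A_1\supseteq\cdots\supseteq A_s$ with $|A_i|\leq\ell_i$ arises from a unique $f\in\mathcal{F}$. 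Hence $|\mathcal{F}|$ equals the number of such chains.

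Next I would bound the number of chains by choosing $A_1,\ldots,A_s$ in order. Set $A_0:=\{1,\ldots,\ell_0\}$. Having fixed $A_{i-1}$, the set $A_i$ is an arbitrary subset of $A_{i-1}$ of cardinality at most $\ell_i$, so the number of choices is at most $\sum_{j=0}^{\ell_i}\binom{|A_{i-1}|}{j}\leq\sum_{j=0}^{\ell_i}\binom{\ell_{i-1}}{j}$, using $|A_{i-1}|\leq\ell_{i-1}$ and monotonicity of binomial coefficients. Multiplying over $i=1,\ldots,s$ gives $|\mathcal{F}|\leq\prod_{i=1}^s\sum_{j=0}^{\ell_i}\binom{\ell_{i-1}}{j}$.

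Finally I would apply the elementary estimate $\sum_{j=0}^{k}\binom{N}{j}\leq(eN/k)^k$, valid for integers $1\leq k\leq N$ — which follows from $\sum_{j\leq k}\binom{N}{j}\leq x^{-k}(1+x)^N\leq x^{-k}e^{xN}$ optimized at $x=k/N$ — to each factor with $N=\ell_{i-1}\geq\ell_i=k\geq 1$ (the hypotheses $\ell_0\geq\ell_1\geq\cdots\geq\ell_s\geq 1$ guarantee applicability). This turns the product into $\prod_{i=1}^s(e\ell_{i-1}/\ell_i)^{\ell_i}$, which is the claimed bound. There is no real obstacle here; the only steps needing a moment of care are verifying that the super-level-set encoding is a bijection onto the admissible chains and checking that the monotonicity of the $\ell_i$ makes the binomial-sum bound legitimate, and both are immediate.
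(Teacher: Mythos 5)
Your proof is correct and complete. Note that the paper itself does not prove this lemma---it only cites Lemma~11 of \cite{La}---so there is no internal proof to compare against; your argument (encoding $f$ by the chain of super-level sets $A_1\supseteq\cdots\supseteq A_s$ with $|A_i|\leq \ell_i$, counting chains greedily, and applying $\sum_{j\leq k}\binom{N}{j}\leq (eN/k)^k$ for $1\leq k\leq N$) is the standard route and all hypotheses needed for each step ($\ell_{i-1}\geq\ell_i\geq 1$) are verified.
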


\begin{proof}[Proof of Theorem \ref{thm:cutPaour}]

We have by the Paouris estimate \eqref{eq:Paour}
\[
\Ex\Big(\sum_{i=1}^nX_i^2\ind_{\{|X_i|\geq t\}}\Big)^p\leq \Ex|X|^{2p}\leq 
(C_1(\sqrt{n}+\sigma_X(2p)))^{2p},%%\leq (C_1(\sqrt{n}+4\sigma_X(p))^{2p},
\]
so the estimate \eqref{eq:cutPaour} is obvious if $\sigma_X(p)\geq \frac{1}{8}\sqrt{n}$, we will thus assume that
$\sigma_X(p)\leq \frac{1}{8}\sqrt{n}$.

Observe that for $l=1,2,\ldots$,
\begin{align*}
\Ex\Big(\sum_{i=1}^nX_i^2\ind_{\{X_i\geq t\}}\Big)^l&\leq 
\Ex\Big(\sum_{i=1}^n\sum_{k=0}^{\infty}4^{k+1}t^2\ind_{\{X_i\geq 2^kt\}}\Big)^l
\\
&=(2t)^{2l}\sum_{i_1,\ldots,i_l=1}^n\sum_{k_1,\ldots,k_l=0}^{\infty} 4^{k_1+\ldots+k_l}\Pr(B_{i_1,k_1\ldots,i_l,k_l}),
\end{align*}
where 
\[
B_{i_1,k_1\ldots,i_l,k_l}:=\{X_{i_1}\geq 2^{k_1}t,\ldots,X_{i_l}\geq 2^{k_l}t_l\}.
\]

Define a positive integer $l$ by 
\[
p< l\leq 2p \quad \mbox{ and }\quad l=2^m \mbox{ for some positive integer }m. 
\]
Then $\sigma_X(p) \leq \sigma_X(l)\leq \sigma_X(2p)\leq 4\sigma_X(p)$. 
Since $-X$ is also isotropic log-concave and  for any nonnegative r.v.\ $Y$, $(\Ex Y^p)^{1/p}\leq (\Ex Y^l)^{1/l}$,  
it is enough to show that
\begin{equation}
\label{eq:aim}
m(l):=\sum_{k_1,\ldots,k_l=0}^{\infty}\sum_{i_1,\ldots,i_l=1}^n 4^{k_1+\ldots+k_l}\Pr(B_{i_1,k_1\ldots,i_l,k_l})\leq
\Big(\frac{C\sigma_X(l)}{t}\Big)^{2l} 
\end{equation}
provided that $t\geq C_2\log(\frac{n}{\sigma_X(l)^2})$. Since $\sigma_X(l)\leq 4\sigma_X(p)\leq \frac{1}{2}\sqrt{n}$
this in particular implies that $t\geq C_2$.  

We divide the sum in $m(l)$ into several parts.
Define sets 
\[
I_{0}:=\big\{(i_1,k_1,,\ldots,i_l,k_l)\colon\   \Pr(B_{i_1,k_1,\ldots,i_l,k_l})> e^{-l}\big\},
\]
and for $j=1,2,\ldots$,
\[
 I_{j}:=\big\{(i_1,k_1,,\ldots,i_l,k_l)\colon\   \Pr(B_{i_1,k_1,\ldots,i_l,k_l})\in (e^{-2^{j}l},e^{-2^{j-1}l}] \big\}.
\]
Then $m(l)=\sum_{j\geq 0}m_j(l)$, where
\[
m_{j}(l):= \sum_{(i_1,k_1,\ldots,i_l,k_l)\in I_j} 4^{k_1+\ldots+k_l}\Pr(B_{i_1,k_1\ldots,i_l,k_l}).
\]

To estimate $m_{0}(l)$ define for $1\leq s\leq l$,
\[
P_sI_0:=\{(i_1,k_1,\ldots,i_s,k_s)\colon (i_1,k_1,\ldots,i_l,k_l)\in I_0 \mbox{ for some }
i_{s+1},\ldots,k_l\}.
\]
We have by \eqref{eq:psi1} (if $C_2$ is large enough)
\[
\Pr(B_{i_1,k_1\ldots,i_s,k_s})\leq \Pr(B_{i_1,k_1})\leq \exp(2-2^{k_1-1}t/e)\leq e^{-1}.
\]
Thus for $s=1,\ldots,l-1$,  
\begin{align*}
&t^2\sum_{(i_1,\ldots,k_{s+1})\in P_{s+1}I_{0}}4^{k_1+\ldots+k_{s+1}}\Pr(B_{i_1,\ldots,k_{s+1}})
\\
&\leq
\sum_{(i_1,\ldots,k_s)\in P_{s}I_{0}}4^{k_1+\ldots+k_{s}}\sum_{i_{s+1}=1}^n\sum_{k_{s+1}=0}^{\infty}
4^{k_{s+1}}t^2\Pr(B_{i_1,\ldots,k_s}\cap\{X_{i_{s+1}}\geq 2^{k_{s+1}}t\})
\\
&\leq
\sum_{(i_1,\ldots,k_s)\in P_{s}I_{0}}4^{k_1+\ldots+k_{s}}
\sum_{i_{s+1}=1}^n\Ex 2X_{i_{s+1}}^2\ind_{B_{i_1,\ldots,k_s}\cap\{X_{i_{s+1}}\geq t\}}
\\
&\leq
2C_5\sum_{(i_1,\ldots,k_s)\in P_{s}I_{0}}4^{k_1+\ldots+k_{s}}
\Pr(B_{i_1,\ldots,k_s})(\sigma_X^2(-\log\Pr(B_{i_1,\ldots,k_s}))+nt^2e^{-t/C_5}),
\end{align*}
where the last inequality follows by \eqref{eq:cond1}.
Note that for $(i_1,\ldots,k_s)\in P_sI_0$ we have
$\Pr(B_{i_1,\ldots,k_s})\geq e^{-l}$ and, by our assumptions on $t$ (if $C_2$ is sufficiently large)
$nt^2e^{-t/C_5}\leq ne^{-t/(2C_5)} \leq \sigma_X^2(l)$.  Therefore
\begin{align*}
\sum_{(i_1,\ldots,k_{s+1})\in P_{s+1}I_{0}}&4^{k_1+\ldots+k_{s+1}}\Pr(B_{i_1,\ldots,k_{s+1}})
\\
&\leq 4C_5t^{-2}\sigma_X^2(l)
\sum_{(i_1,\ldots,k_{s})\in P_{s}I_{0}}4^{k_1+\ldots+k_{s}}\Pr(B_{i_1,\ldots,k_{s}}).
\end{align*}
By induction we get
\begin{align*}
m_{0}(l)&
=\sum_{(i_1,\ldots,k_l)\in I_{0}}4^{k_1+\ldots+k_{l}}\Pr(B_{i_1,\ldots,k_l})
\\
&\leq
(4C_5t^{-2}\sigma_X^2(l))^{l-1}
\sum_{(i_1,k_1)\in P_1I_0}4^{k_1}\Pr(B_{i_1,k_1})
\\
&\leq (4C_5t^{-2}\sigma_X^2(l))^{l-1}t^{-2}\sum_{i=1}^n2\Ex X_i^2\ind_{\{X_i\geq t\}}
\\
&\leq (4C_5t^{-2}\sigma_X^2(l))^{l-1}nCe^{-t/C}
\leq \Big(\frac{C\sigma_X(l)}{t}\Big)^{2l}, 
\end{align*}
where the last inequality follows from the assumptions on $t$.

Now we estimate $m_j(l)$ for $j>0$. 
Fix $j>0$ and define a positive integer $r_1$ by 
\[
2^{r_1-1}< \frac{t}{C_5}\leq 2^{r_1}.
\]
For all $(i_1,k_1,\ldots,i_l,k_l)\in I_j$ define a function 
$f_{i_1,k_1,\ldots,i_l,k_l}\colon \{1,\ldots,\ell\}\rightarrow \{0,1,2,\ldots\}$ 
by 
\[
f_{i_1,k_1,\ldots,i_l,k_l}(s):=
\left\{
\begin{array}{ll}
0 &\mbox{if } \frac{\Pr(B_{i_1,k_1,\ldots,i_s,k_s})}{\Pr(B_{i_1,k_1,\ldots,i_{s-1},k_{s-1}})}>e^{-1}
\\
r &\mbox{if } e^{-2^{r}}< \frac{\Pr(B_{i_1,k_1,\ldots,i_s,k_s})}{\Pr(B_{i_1,k_1,\ldots,i_{s-1},k_{s-1}})}\leq e^{-2^{r-1}},
\ r\geq 1.
\end{array}
\right.
\]
Note that for every $(i_1,k_1,\ldots,i_l,k_l) \in  I_j$ one has 
\[
1= \Pr(B_{\emptyset})\geq \Pr(B_{i_1,k_1}) \geq \Pr(B_{i_1,k_1,i_2,k_2})\geq \ldots 
\geq \Pr(B_{i_1,k_1\ldots,i_l,k_l}) > \exp(-2^{j}l).
\]

Denote 
\[
{\cal F}_j:=\big\{f_{i_1,k_1,\ldots,i_l,k_l}\colon\ (i_1,k_1,\ldots,i_l,k_l)\in I_j\big\}.
\]
Then for $f=f_{i_1,k_1,\ldots,i_l,k_l}\in {\cal F}_j$ and  $r\geq 1$ 
one has
\begin{align*}
\exp(-2^{j}l)< \Pr(B_{i_1,k_1,\ldots,i_l,k_l}) 
&= \prod_{s=1}^{\ell}  \frac{\Pr(B_{i_1,k_1\ldots,i_s,k_s})}{\Pr(B_{i_1,k_1,\ldots,i_{s-1},k_{s-1}})}  
\\
&\leq \exp(-2^{r-1}|\{s\colon\ f(s)\geq r\}|).
\end{align*}
Hence for every $r\geq 1$ one has
\begin{equation}
\label{est_l_r}
|\{s\colon\ f(s)\geq r \}| \leq \min\{2^{j+1-r}l,l\}=:l_r.
\end{equation}
In particular $f$ takes values in $\{0,1,\ldots,j+1+\lfloor\log_2 l\rfloor\}$.
Clearly, $\sum_{r\geq 1}l_r= (j+2)l$ and $l_{r-1}/l_r\leq 2$, 
so by Lemma \ref{combf} 
\[
|{\cal F}_j| \leq \prod_{r=1}^{j+1+\lfloor\log_2 l\rfloor}\Big(\frac{e l_{r-1}}{l_r} \Big)^{l_r}\leq e^{2(j+2)l}.
\]

Now fix $f\in {\cal F}_j$ and define 
\[
I_j(f):=\{(i_1,k_1,\ldots,i_l,k_l)\colon\ f_{i_1,k_1,\ldots,i_l,k_l}=f\}
\]
and for $s\leq l$,
\[
I_{j,s}(f):=\{(i_1,k_1,\ldots,i_s,k_s)\colon\ f_{i_1,k_1,\ldots,i_l,k_l}=f\mbox{ for some }
i_{s+1},k_{s+1}\ldots,i_l,k_l\}.
\]

Recall that for $s\geq 1$, $\Pr(B_{i_1,k_1,\ldots,i_s,k_s})\leq e^{-1}$,  
moreover for $s\leq l$,
\begin{align*}
\sigma_X(-\log\Pr(B_{i_1,k_1,\ldots,i_s,k_s}))
&\leq \sigma_X(-\log\Pr(B_{i_1,k_1,\ldots,i_l,k_l}))
\leq \sigma_X(2^jl)
\\
&\leq 2^{j+1}\sigma_X(l). 
\end{align*}
Hence estimate \eqref{eq:cond2} applied with $u=2^{f(s+1)}$ implies for $1\leq s\leq l-1$,
\begin{align*}
&\sum_{(i_1,k_1,\ldots,i_{s+1},k_{s+1})\in I_{j,s+1}(f)}4^{k_1+\ldots+k_{s+1}}\Pr(B_{i_1,k_1,\ldots,i_{s+1},k_{s+1}})
\\
&\phantom{aaaaaaaaaaaaaa}
\leq g(f(s+1))\sum_{(i_1,k_1,\ldots,i_{s},k_{s})\in I_{j,s}(f)}4^{k_1+\ldots+k_{s}}\Pr(B_{i_1,k_1,\ldots,i_{s},k_{s}}),
\end{align*}
where
\[
g(r):=
\left\{
\begin{array}{ll}
%%\frac{C_0}{t^2}2^{2j}\sigma_X(l)&\mbox{ if }r=0
%%\\
C_5t^{-2}4^{r+j+1}\sigma^2_X(l)&\mbox{ if } r=1,
\\
C_5t^{-2}4^{r+j+1}\sigma^2_X(l)\exp(-2^{r-1})&\mbox{ if } 1\leq r< r_1,
\\
C_5t^{-2}4^r(4^{j+1}\sigma^2_X(l)+n)\exp(-2^{r-1})&\mbox{ if }r\geq r_1.
\end{array}
\right.
\]

Suppose that $(i_1,k_1)\in I_1(f)$ and $f(1)=r$ then
\[
\exp(-2^{r})\leq \Pr(X_{i_1}\geq 2^{k_1}t)\leq \exp(2-2^{k_1-1}t/e),
\] 
hence $2^{k_1}t\leq e2^{r+2}$. W.l.o.g.\ $C_5>4e$, therefore $r\geq r_1$.
Moreover, $4^{k_1}\leq 16e^24^rt^{-2}$, hence
\[
\sum_{(i_1,k_1)\in I_{j,1}(f)}4^{k_1}\Pr(B_{i_1,k_1})\leq n32e^2t^{-2}4^r\exp(-2^{r-1})\leq g(r)=g(f(1)),
\]
since we may assume that $C_5\geq 32e^2$.
Thus the easy induction shows that
\[
m_j(f):=\sum_{(i_1,\ldots,k_{l})\in I_{j}(f)}4^{k_1+\ldots+k_{l}}\Pr(B_{i_1,k_1,\ldots,i_{l},k_{l}})
\leq \prod_{s=1}^lg(f(s))=\prod_{r=0}^{\infty}g(r)^{n_r},
\]
where $n_r:=|f^{-1}(r)|$.

Observe that 
\[
e^{-2^{j-1}l}\geq \Pr(B_{i_1,k_1,\ldots,i_l,k_l})
=\prod_{s=1}^{l}\frac{\Pr(B_{i_1,k_1,\ldots,i_s,k_s})}{\Pr(B_{i_1,k_1,\ldots,i_{s-1},k_{s-1}})}
\geq e^{-l}\prod_{s\colon f(s)\geq 1}e^{-2^{f(s)}}
\]
therefore
\[
\sum_{r=1}^{\infty}n_r2^{r-1}=\frac{1}{2}\sum_{s\colon f(s)\geq 1}2^{f(s)}\geq 
\frac{1}{2}l(2^{j-1}-1).
\]
Moreover $4^{j+1}\sigma_X^2(l)+n\leq 2\cdot 4^{j+1}n$ and
\[
\sum_{r\geq 1} rn_r\leq (j+1)l+\sum_{r\geq j+2}rl_r= (2j+4)l.
\]
Hence
\[
\prod_{r=0}^{\infty}g(r)^{n_r}\leq 
\Big(\frac{C_54^{j+1}\sigma_X^2(l)}{t^2}\Big)^l4^{(2j+4)l}\Big(\frac{2n}{\sigma_X^2(l)}\Big)^m\exp(-\frac{l}{2}(2^{j-1}-1)).
\]
where $m=\sum_{r\geq r_1}n_r\leq l_{r_1}\leq 2^{j+1-r_1}l$.
By the assumption on $l$ we have $(2n/\sigma^2_X(l))\leq 2\exp(t/C_2)\leq \exp(2^{r_1-4})$ if $C_2$ is large enough
with respect to $C_5$.
Hence
\[
m_j(l)\leq \Big(\frac{\sqrt{e}C_54^{3j+5}\sigma_X^2(l)}{t^2}\Big)^l\exp(-l2^{j-3})
\]
and we get
\[
m(l)= \sum_{j=0}^{\infty} m_j(l)\leq  \Big(\frac{C\sigma_X(l)}{t}\Big)^{2l}+
\sum_{j\geq 1}\Big(\frac{\sqrt{e}C_54^{3j+5}\sigma_X^2(l)}{t^2}\Big)^l\exp(-l2^{j-3})
\]
and \eqref{eq:aim} easily follows. 
\end{proof}

\section{Estimates for joint distribution of order statistics}

For a random vector $X=(X_1,\ldots,X_n)$ by $X_1^*\geq X_2^*\geq\ldots\geq X_n^*$ we denote the nonincreasing
rearrangement of $|X_1|,\ldots,|X_n|$, in particular $X_1^*=\max\{|X_1|,\ldots,|X_n|\}$ and
$X_n^*=\min\{|X_1|,\ldots,|X_n|\}$. The following consequence of Theorem \ref{thm:cutPaour} generalizes Theorem 3.3
from \cite{ALLPT}.

\begin{thm}
\label{thm:jointos}
Let $X$ be an isotropic log-concave vector,
$0=l_0< l_1<l_2<\ldots<l_k\leq n$ and $t_1,\ldots,t_k\geq 0$ be such that 
\[
t_r\geq C_7\log\bigg(\frac{C_7^2n}{\sum_{j=1}^st_j^2(l_j-l_{j-1})}\bigg)\quad \mbox{ for }1\leq r\leq k. 
\]
Then
\[
\Pr\Big(X_{l_1}^*\geq t_1,\ldots,X_{l_k}^*\geq t_k\Big)
\leq \exp\Bigg(-\sigma_X^{-1}\Bigg(\frac{1}{C_7}\sqrt{\sum_{j=1}^k t_j^2(l_j-l_{j-1})}\Bigg)\Bigg).
\]
\end{thm}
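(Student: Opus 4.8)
The plan is to reduce the probability estimate to a single deterministic inequality and then invoke Theorem~\ref{thm:cutPaour} together with Markov's inequality. Put $E:=\{X_{l_1}^*\ge t_1,\ldots,X_{l_k}^*\ge t_k\}$ and $S^2:=\sum_{j=1}^k t_j^2(l_j-l_{j-1})$. The key observation is that on $E$,
\[
\sum_{i=1}^n X_i^2\ind_{\{|X_i|\ge t_k\}}\ \ge\ S^2 .
\]
Indeed $\sum_{i=1}^n X_i^2\ind_{\{|X_i|\ge t_k\}}=\sum_{i=1}^n (X_i^*)^2\ind_{\{X_i^*\ge t_k\}}$, and on $E$ each of the $l_k$ largest moduli satisfies $X_i^*\ge X_{l_k}^*\ge t_k$, so this quantity is at least $\sum_{i=1}^{l_k}(X_i^*)^2$; splitting $\{1,\dots,l_k\}$ into the blocks $(l_{r-1},l_r]$ and using $X_i^*\ge X_{l_r}^*\ge t_r$ on the $r$-th block gives $\sum_{i=1}^{l_k}(X_i^*)^2\ge\sum_{r=1}^k t_r^2(l_r-l_{r-1})=S^2$. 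Thus $E\subseteq\{\sum_{i=1}^n X_i^2\ind_{\{|X_i|\ge t_k\}}\ge S^2\}$; note that no monotonicity of the $t_r$ is needed, and that only the case $r=k$ of the hypothesis, i.e. $t_k\ge C_7\log(C_7^2n/S^2)$, enters.

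Next I would choose the largest positive integer $p$ with $\sigma_X(p)\le S/D$, where $D$ is a suitable absolute constant; the doubling bound $\sigma_X(2p)\le 4\sigma_X(p)$ from Section~2 then forces $\sigma_X(p)\ge S/(4D)$ as well, so $\sigma_X(p)$ is comparable to $S$ and $p$ is comparable to $\sigma_X^{-1}(S/D)$. With $t=t_k$ this $p$ satisfies the hypothesis $t_k\ge C_2\log(n/\sigma_X(p)^2)$ of Theorem~\ref{thm:cutPaour}: since $\sigma_X(p)^2$ is comparable to $S^2$ we have $n/\sigma_X(p)^2\le C_7^2n/S^2$ once $C_7$ is large enough, and then $C_2\log(n/\sigma_X(p)^2)\le C_2\log(C_7^2n/S^2)\le C_7\log(C_7^2n/S^2)\le t_k$ provided $C_7\ge C_2$. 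Hence, by Markov's inequality and Theorem~\ref{thm:cutPaour},
\[
\Pr(E)\ \le\ \Pr\Big(\sum_{i=1}^n X_i^2\ind_{\{|X_i|\ge t_k\}}\ge S^2\Big)\ \le\ S^{-2p}\,\Ex\Big(\sum_{i=1}^n X_i^2\ind_{\{|X_i|\ge t_k\}}\Big)^{p}\ \le\ \Big(\frac{C_2\sigma_X(p)}{S}\Big)^{2p}\ \le\ e^{-p},
\]
the last step because $\sigma_X(p)\le S/D$ and $D$ is chosen with $C_2/D\le e^{-1/2}$.

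It remains to pass from $e^{-p}$ to $\exp(-\sigma_X^{-1}(S/C_7))$. Since $p$ is comparable to $\sigma_X^{-1}(S/D)$, the scaling inequality $\sigma_X^{-1}(\lambda t)\ge\frac{\lambda}{2}\sigma_X^{-1}(t)$ from Section~2 lets one trade the constant $D$ — together with the integer-rounding loss in $p$ and the behaviour of $\sigma_X$ near $p=1,2$ — for a larger absolute constant $C_7$ inside the argument of $\sigma_X^{-1}$, giving $p\ge\sigma_X^{-1}(S/C_7)$. The only regime not covered this way is that of $S$ bounded by an absolute constant, where no admissible $p$ exists; there the hypothesis forces $t_k$ to be at least of order $\log n$, and the crude inclusion $E\subseteq\{\max_i|X_i|\ge t_k\}$ combined with a union bound and the $\Psi_1$-estimate~\eqref{eq:psi1} already gives $\Pr(E)\le e^{-1}=\exp(-\sigma_X^{-1}(S/C_7))$, using that $\sigma_X^{-1}$ equals $1$ on that range (as $\sigma_X(1)$ is bounded below by an absolute constant). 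The only real difficulty I anticipate is this constant bookkeeping — arranging that the single assumed lower bound on $t_k$ matches, for the chosen $p$, the threshold condition of Theorem~\ref{thm:cutPaour}, and that $e^{-p}$ lines up with $\exp(-\sigma_X^{-1}(S/C_7))$; there is no genuine analytic obstacle beyond Theorem~\ref{thm:cutPaour} itself.
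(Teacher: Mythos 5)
Your proposal is correct and follows essentially the same route as the paper: the pointwise inclusion of the event into $\{\sum_i X_i^2\ind_{\{|X_i|\ge t\}}\ge S^2\}$, followed by Chebyshev's inequality at a level $p$ chosen so that $\sigma_X(p)$ is a small constant multiple of $S$, and an appeal to Theorem~\ref{thm:cutPaour}. The only cosmetic differences are that the paper truncates at $t=\min_j t_j$ rather than $t_k$, sets $p:=\sigma_X^{-1}(e^{-1/2}S/C_2)$ directly, and disposes of the small-$S$ regime by noting the hypothesis forces $S\ge\sqrt{e}\,C_2$ (so no separate union-bound case is needed).
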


\begin{proof}
Let $t:=\min\{t_1,\ldots,t_k\}$, $u:=(\sum_{j=1}^k t_j^2(l_j-l_{j-1}))^{1/2}$ and $p:=\sigma_X^{-1}(e^{-1/2}u/C_2)$.
It is not hard to see that if $C_7$ is large enough then $u\geq \sqrt{e}C_2$, so $p\geq 2$.
Assumptions imply (if $C_7$ is large enough) that $C_2\log(n/\sigma_X^2(p))=C_2\log(enC_2^2/u^2)\leq t$.
Therefore Chebyshev's inequality and Theorem \ref{thm:cutPaour} yield
\begin{align*}
\Pr\Big(X_{l_1}^*\geq t_1,\ldots,&X_{l_k}^*\geq t_k\Big)\leq 
\Pr\Big(\sum_{i=1}^nX_i^2\ind_{\{|X_i|\geq t\}}\geq u^2\Big)
\\
&\leq u^{-2p}\Ex\Big(\sum_{i=1}^nX_i^2\ind_{\{|X_i|\geq t\}}\Big)^p\leq
\Big(\frac{C_2\sigma_X(p)}{u}\Big)^{2p}\leq e^{-p}. 
\end{align*}
\end{proof}

\begin{cor}
\label{cor:jointos}
Let $X$ be an isotropic log-concave vector and 
\[
Y_j:=\Big(X_{2^{j-1}}^*-C_7\log(4n2^{-j})\Big)_{+},\quad  1\leq j\leq 1+\log_2 n. 
\]
Then for any $1\leq s\leq 1+\log_2 n$ and $u_1,\ldots,u_s\geq 0$ we have
\[
\Pr(Y_1\geq u_1,\dots,Y_s\geq u_s)
\leq  \exp\Bigg(-\sigma_X^{-1}\Bigg(\frac{1}{2C_7}\sqrt{\sum_{j=1}^s 2^{j}u_j^2}\Bigg)\Bigg).
\]
\end{cor}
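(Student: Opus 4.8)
The plan is to reduce the corollary to Theorem~\ref{thm:jointos} applied along the dyadic scales $l_j=2^{j-1}$. First one may assume $u_j>0$ for every $j$: if all $u_j=0$ the bound is trivial, and otherwise, writing $J:=\{j\le s:u_j>0\}$ and using that $\{Y_j\ge 0\}$ is the whole space, one has $\Pr(Y_1\ge u_1,\ldots,Y_s\ge u_s)=\Pr\bigl(\bigcap_{j\in J}\{Y_j\ge u_j\}\bigr)$ while $\sum_{j=1}^s2^ju_j^2=\sum_{j\in J}2^ju_j^2$; and for $j\in J$ the event $\{Y_j\ge u_j\}$ is exactly $\{X^*_{2^{j-1}}\ge u_j+C_7\log(4n2^{-j})\}$. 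Listing $J=\{j_1<\cdots<j_m\}$, I would apply Theorem~\ref{thm:jointos} with $l_i:=2^{j_i-1}$ (and $l_0:=0$) and $t_i:=u_{j_i}+C_7\log(4n2^{-j_i})$. Here $l_m\le n$, the $l_i$ are strictly increasing, $t_i>0$ (because $j_i\le 1+\log_2 n$ forces $4n2^{-j_i}\ge 2$), and $l_i-l_{i-1}\ge 2^{j_i-2}$ for every $i$ (for $i\ge2$ this uses $j_{i-1}\le j_i-1$; for $i=1$ it is clear).

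Granting the hypothesis of Theorem~\ref{thm:jointos}, the conclusion is immediate: that theorem bounds the probability by $\exp\bigl(-\sigma_X^{-1}\bigl(C_7^{-1}(\sum_i t_i^2(l_i-l_{i-1}))^{1/2}\bigr)\bigr)$, and since $t_i\ge u_{j_i}$ and $l_i-l_{i-1}\ge 2^{j_i-2}$ one gets $\sum_i t_i^2(l_i-l_{i-1})\ge\tfrac14\sum_{j\in J}2^ju_j^2$, so monotonicity of $\sigma_X^{-1}$ yields precisely the asserted bound with constant $\tfrac1{2C_7}$. The real content is checking, for each $r$, that $t_r\ge C_7\log\bigl(C_7^2n\big/\sum_{i\le r}t_i^2(l_i-l_{i-1})\bigr)$. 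Bounding the denominator below by $t_r^2 2^{j_r-2}$ and writing $a_r:=\log(4n2^{-j_r})\ge\log 2$ (so $4n2^{-j_r}=e^{a_r}$), this reduces to $t_r+2C_7\log t_r\ge 2C_7\log C_7+C_7a_r$; since $t_r=u_{j_r}+C_7a_r\ge C_7a_r$ absorbs the term $C_7a_r$, it suffices that $t_r\ge C_7$, i.e.\ that $a_r\ge 1$. This holds whenever $2^{j_r}\le 4n/e$, in particular for every $j_r\le\log_2 n$; so the only index at which it can fail is the very top one, $j_r=\lfloor 1+\log_2 n\rfloor$, and then only when $n$ lies within a bounded factor of a power of $2$.

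I expect this borderline top scale to be the one genuinely delicate point. There $l_m=2^{j_m-1}$ is comparable to $n$ (indeed $l_m>2n/e$), so the single event $\{X^*_{l_m}\ge t_m\}$ forces at least $l_m$ of the $|X_i|$ to exceed $t_m$, whence $|X|\ge t_m\sqrt{l_m}\gtrsim C_7\sqrt n$; as $C_7$ is a large absolute constant this is a large deviation for $|X|$ well beyond $\sqrt n$, and the Paouris inequality~\eqref{eq:Paour} with Chebyshev bounds its probability by something far smaller than needed. Applying Theorem~\ref{thm:jointos} to the remaining indices $j_1<\cdots<j_{m-1}$ (whose hypotheses do hold) and combining with this direct estimate should dispose of the borderline case, the contribution $2^{j_m}u_{j_m}^2\lesssim C_7^2n$ of the top scale being dominated by the $C_7\sqrt n$ deviation — here I would be careful to lose no more than an absolute constant in front of $\sqrt{\sum_{j=1}^s2^ju_j^2}$. (Alternatively, in this regime the claimed estimate is essentially trivial.)
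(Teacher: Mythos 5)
Your proposal is correct and follows the paper's own route: the same reduction to Theorem \ref{thm:jointos} along the nonzero indices, with $l_i=2^{j_i-1}$, $t_i=u_{j_i}+C_7\log(4n2^{-j_i})$, and the observation $\sum_i t_i^2(l_i-l_{i-1})\ge\tfrac14\sum_j 2^ju_j^2$. The ``delicate top scale'' you isolate arises only because you read the hypothesis of Theorem \ref{thm:jointos} with partial sums $\sum_{i\le r}$; the proof of that theorem uses only the full sum $u^2=\sum_{i=1}^k t_i^2(l_i-l_{i-1})$ (the index $s$ there is a typo for $k$), under which the paper's one-line check $u^2\ge C_7^2\,2^{j_r-2}$ disposes of all scales at once (modulo the same $\log 2$--versus--$1$ constant at the top scale, which is absorbed by adjusting $C_7$), so your Paouris patch, while sound, is not needed.
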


\begin{proof} 
Let
\[
I=\{j\geq 0\colon\ u_j>0\}=\{i_1<\ldots<i_k\}. 
\] 
If $I=\emptyset$ there is nothing to prove, so we may assume that $k\geq 1$. Let $l_0=0$, $l_j=2^{i_j-1}$,
$t_j:=C_7\log(4n2^{-i_j})+u_{i_j}$  for $1\leq j\leq k$ and $u:=(\sum_{j=1}^k(l_j-l_{j-1})t_j^2)^{1/2}$.
Then for $1\leq j\leq k$, $u^2\geq C_7^22^{i_j-2}$ therefore $t_j\geq C_7\log(C_7^2n/u^2)$ for all $j$ and we may apply
Theorem \ref{thm:jointos} and get
\begin{align*}
\Pr(Y_1\geq u_1,\dots,Y_s\geq u_s)&=\Pr(X_{l_1}^*\geq t_1 ,\ldots,X_{l_k}\geq t_k)\leq
\exp\Big(-\sigma_X^{-1}\Big(\frac{1}{C_7}u\Big)\Big)
\\
&\leq \exp\Bigg(-\sigma_X^{-1}\Bigg(\frac{1}{2C_7}\sqrt{\sum_{j=1}^s 2^{j}u_j^2}\Bigg)\Bigg). 
\end{align*}
\end{proof}

\begin{lem}
\label{lem:prsum}
For nonnegative r.v.'s $Y_1,\ldots,Y_s$ and $u>0$ we have
\[
\Pr\Big(\sum_{i=1}^s Y_i\geq u\Big)\leq 
\sum_{(k_1,\ldots,k_s)\in I_s}\Pr\Big(Y_1\geq \frac{k_1u}{2s},\ldots,Y_n\geq \frac{k_su}{2s} \Big),
\]
where
\[
I_s:=\{k_1,\ldots,k_s\in \{0,1,\ldots,s\}^s\colon\ k_1+\ldots+k_s=s\}.
\]
\end{lem}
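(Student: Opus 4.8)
The plan is to reduce the statement to a covering argument and a union bound. I would first observe that it suffices to show the inclusion of events
\[
\Big\{\sum_{i=1}^s Y_i\ge u\Big\}\ \subseteq\ \bigcup_{(k_1,\ldots,k_s)\in I_s}\Big\{Y_1\ge \tfrac{k_1u}{2s},\ldots,Y_s\ge \tfrac{k_su}{2s}\Big\},
\]
since the asserted inequality is then just the union bound applied to the right-hand side. So the whole content is the construction of, for each sample point in the left-hand event, an index vector in $I_s$ witnessing membership in the union.

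To build that vector, fix a point with $\sum_{i=1}^s Y_i\ge u$ and set $m_i:=\lfloor 2sY_i/u\rfloor$, a nonnegative integer. By definition of the floor one has $Y_i\ge m_iu/(2s)$, and also $m_i>2sY_i/u-1$, so summing gives $\sum_{i=1}^s m_i>2s\sum_{i=1}^s Y_i/u-s\ge 2s-s=s$; in particular $\sum_{i=1}^s m_i\ge s$.

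Next I would pass from $(m_1,\ldots,m_s)$ to an admissible vector: since the $m_i$ are nonnegative integers with total at least $s$, one can repeatedly decrease a strictly positive coordinate by one (such a coordinate exists whenever the running total still exceeds $s$) until the total equals $s$, obtaining nonnegative integers $k_i\le m_i$ with $\sum_{i=1}^s k_i=s$. Then each $k_i\le\sum_{j=1}^s k_j=s$, so $(k_1,\ldots,k_s)\in I_s$, while $Y_i\ge m_iu/(2s)\ge k_iu/(2s)$ for every $i$. This places the sample point in the event indexed by $(k_1,\ldots,k_s)$, which establishes the inclusion and hence the lemma.

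The argument is elementary; the only delicate point is the rounding step, i.e.\ producing from the floors $m_i$ a vector genuinely in $I_s$ (sum exactly $s$, each coordinate at most $s$) without spoiling the inequalities $Y_i\ge k_iu/(2s)$. This is precisely why one divides by $2s$ rather than $s$: the extra factor $2$ yields the surplus $\sum_i m_i\ge s$ that makes the downward adjustment possible, and shrinking coordinates only weakens the threshold conditions, so the witnessing inequalities survive.
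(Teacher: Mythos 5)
Your proof is correct and takes essentially the same route as the paper: set $m_i=\lfloor 2sY_i/u\rfloor$, observe $Y_i\ge m_iu/(2s)$ and $\sum_i m_i\ge s$, then apply the union bound. The only difference is that you spell out the step of shrinking the $m_i$ to nonnegative integers $k_i\le m_i$ with $\sum_i k_i=s$ (and hence $k_i\le s$), which the paper leaves implicit; that added care is welcome but does not change the argument.
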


\begin{proof}
It is enough to observe that if $y_1+\ldots+y_s\geq u$ and we set $l_i:=\lfloor 2sy_i/u\rfloor$ then $y_i\geq l_iu/(2s)$ and
$\sum_{i=1}^sl_i\geq \sum_{i=1}^s (2sy_i/u-1)\geq s$.
\end{proof}

\begin{proof}[Proof of Theorem \ref{thm:lrPaour}]
Let $s:=1+\lfloor \log_2 n\rfloor$ and $Y_j$, $1\leq j\leq s$ be as in Corollary \ref{cor:jointos}.
We have
\begin{align*}
\|X\|_r^r
&=\sum_{i=1}^n |X_i^*|^r\leq \sum_{j=1}^s2^{j-1}|X_{2^{j-1}}^*|^r\leq
\sum_{j=1}^s2^{r+j-1}(Y_j^r+C_7^r\log^r(4n2^{-j}))
\\
&\leq (C_8r)^rn+\sum_{j=1}^s2^{r+j-1}Y_j^r.
\end{align*}
By Lemma \ref{lem:prsum}
\[
\Pr\Big(\sum_{j=1}^s2^{r+j-1}Y_j^r\geq u^r\Big)\leq
\sum_{(k_1,\ldots,k_s)\in I_s}\Pr\Big(2Y_1^r\geq \frac{k_1u^r}{s2^r},\ldots,2^{s}Y_s^2\geq \frac{k_su^r}{s2^r}\Big).
\]
Moreover for any $(k_1,\ldots,k_s)\in I_s$,
\[
\sum_{j=1}^s2^{j-2j/r}\Big(\frac{k_j}{s}\Big)^{2/r}\geq 
\sum_{j=1}^s\Big(\frac{k_j}{s}\Big)^{2/r}\geq
\Big(\sum_{j=1}^s \frac{k_j}{s}\Big)^{2/r}=1.
\]
Therefore Corollary \ref{cor:jointos} yields
\[
\Pr\Big(\sum_{j=1}^s2^{r+j-1}Y_j^r\geq u^r\Big)\leq |I_s|\exp\Big(-\sigma_X^{-1}\Big(\frac{u}{4C_7}\Big)\Big).
\]
However $|I_s|=\binom{2s-1}{s-1}\leq 2^{2s-2}\leq n^2$, so we obtain for $u\geq 2C_8rn^{1/r}$,
\[
\Pr(\|X\|_r\geq u)\leq n^2\exp\Big(-\sigma_X^{-1}\Big(\frac{u}{8C_7}\Big)\Big).
\]
Since $rn^{1/r}\geq e\log n$ and for $\lambda,s\geq 1$, $\sigma_X^{-1}(2\lambda s)\geq \lambda \sigma_{X}^{-1}(s)$
and $\sigma_{X}^{-1}(s)\geq s$ we get
\[
\Pr(\|X\|_r\geq Ct)\leq \exp(-\sigma_{X}^{-1}(t))\quad \mbox{ for }t\geq rn^{1/r}. 
\]
Integration by parts easily yields \eqref{eq:lrPaour}.
\end{proof}

\begin{proof}[Proof of Theorem \ref{thm:unifP}]
Let $s:=1+\lfloor \log_2 m\rfloor$ and $Y_j$, $1\leq j\leq s$ be as in Corollary \ref{cor:jointos}.
Then
\begin{align*}
\sup_{|I|=m}|P_IX|^2
&=\sum_{i=1}^m|X_i^*|^2\leq \sum_{j=1}^{s}2^{j-1}|X_{2^{j-1}}^*|^2
\leq 
\sum_{j=1}^{s}2^{j}(C_7^2\log^2(4n2^{-j})+Y_j^2)
\\
&\leq C_9m\log^2(en/m)+\sum_{j=1}^s2^jY_j^2.
\end{align*}
Moreover,
\begin{align*}
\Pr\Big(\sum_{j=1}^s2^jY_j^2\geq u^2\Big)&\leq
\sum_{(k_1,\ldots,k_s)\in I_s}\Pr\Big(2Y_1^2\geq \frac{k_1u^2}{2s},\ldots,2^sY_s^2\geq \frac{k_su^2}{2s}\Big)
\\
&\leq |I_s|\exp\Big(-\sigma_X^{-1}\Big(\frac{u}{2\sqrt{2}C_7}\Big)\Big),
\end{align*}
where the first inequality follows by Lemma \ref{lem:prsum} and the second one by Corollary \ref{cor:jointos}.
Observe that $|I_s|=\binom{2s-1}{s-1}\leq 2^{2s-2}\leq m^2$, thus we showed that for 
$u\geq \sqrt{2C_9m}\log(en/m)$
\[
\Pr\Big(\max_{|I|=m}|P_IX|\geq u\Big)\leq m^2\exp\Big(-\sigma_X^{-1}\Big(\frac{u}{4C_7}\Big)\Big).
\]
Since for $\lambda,s\geq 1$, $\sigma_X^{-1}(2\lambda s)\geq \lambda \sigma_{X}^{-1}(s)$  and $\sigma_{X}^{-1}(s)\geq s$
we easily get for $t\geq 1$,
\[
\Pr\Big(\max_{|I|=m}|P_IX|\geq Ct\sqrt{m}\log(en/m)\Big)\leq 
\exp\Big(-\sigma_X^{-1}\Big(t\sqrt{m}\log(en/m)\Big)\Big).
\]
Theorem \ref{thm:unifP} follows by integration by parts.
\end{proof}

\noindent
Institute of Mathematics\\
University of Warsaw\\
Banacha 2\\
02-097 Warszawa\\
Poland\\
\texttt{rlatala@mimuw.edu.pl}
\end{document}